\newtheorem{theorem}{Theorem}[section]
\newtheorem{proposition}[theorem]{Proposition}
\newtheorem{lemma}[theorem]{Lemma}
\theoremstyle{definition}
\newtheorem{definition}[theorem]{Definition}
\newtheorem{remark}[theorem]{Remark}
\title{Metric groups, unitary representations and continuous logic} 
\author{Aleksander Ivanov (Iwanow) }
\begin{document} 

\maketitle 


\begin{abstract} 
We describe how properties of metric groups and of unitary representations of 
metric groups can be presented in continuous logic.
In particular we find $L_{\omega_1 \omega}$-axiomatization of amenability.  
We also show that in the case of locally compact groups 
some uniform version of the negation of Kazhdan's property (T) 
can be viewed as a union of first-order axiomatizable classes. 
We will see when these properties are preserved under taking elementary substructures. 
\end{abstract}

%
%
%
%
%
%
%
%
%

\section{Introduction}
\label{intro} 
In this paper we study the behavior of amenability and 
Kazhdan's property {\bf (T)} under logical constructions. 
We view these tasks as a part of investigations of 
properties of basic classes of topological groups 
appeared in measurable and geometric group theory, 
see \cite{GriHarp}, \cite{HHM}, \cite{pestov16}. 
The fact that some logical constructions, for example ultraproducts, 
have become common in group theory, gives additional 
flavour for this topic.   
We concentrate on properties of metric groups which can be expressed in  continuous logic \cite{BYBHU}.   

Since we want to make the paper available for non-logicians,  
in Section 2 we briefly remind 
the reader some preliminaries of continuous logic.   
In Section 3 we apply it to amenability. 
In Section 4 we consider unitary representations of 
locally compact groups.

\section{Basic continuous logic} 

\subsection{Continuous structures} 
\label{S_1_1}

We fix a countable continuous signature 
$$
L=\{ d,R_1 ,...,R_k ,..., F_1 ,..., F_l ,...\}. 
$$  
Let us recall that a {\em metric $L$-structure} 
is a complete metric space $(M,d)$ with $d$ bounded by 1, 
along with a family of uniformly continuous operations $F_i$ 
on $M$ and a family of predicates $R_i$, 
i.e. uniformly continuous maps 
from appropriate $M^{k_i}$ to $[0,1]$.   
It is usually assumed that to a predicate symbol $R_i$ 
a continuity modulus $\gamma_i$ is assigned so that when 
$d(x_j ,x'_j ) <\gamma_i (\varepsilon )$ with $1\le j\le k_i$ 
the corresponding predicate of $M$ satisfies 
$$ 
|R_i (x_1 ,...,x_j ,...,x_{k_i}) - R_i (x_1 ,...,x'_j ,...,x_{k_i})| < \varepsilon . 
$$ 
It happens very often that $\gamma_i$ coincides with $id$. 
In this case we do not mention the appropriate modulus. 
We also fix continuity moduli for functional symbols. 
Each classical first-order structure can be considered 
as a complete metric structure with the discrete $\{ 0,1\}$-metric. 

By completeness,  continuous substructures of a continuous structure are always closed subsets. 

Atomic formulas are the expressions of the form $R_i (t_1 ,...,t_r )$, 
$d(t_1 ,t_2 )$, where $t_i$ are terms (built from functional $L$-symbols). 
In metric structures they can take any value from $[0,1]$.   
{\em Statements} concerning metric structures are usually 
formulated in the form 
$$
\phi = 0 
$$ 
(called an $L$-{\em condition}), where $\phi$ is a {\em formula}, 
i.e. an expression built from 
0,1 and atomic formulas by applications of the following functions: 
$$ 
x/2  \mbox{ , } x\dot- y= \mathsf{max} (x-y,0) \mbox{ , } \mathsf{min}(x ,y )  \mbox{ , } \mathsf{max}(x ,y )
\mbox{ , } |x-y| \mbox{ , } 
$$ 
$$ 
\neg (x) =1-x \mbox{ , } x\dot+ y= \mathsf{min}(x+y, 1) \mbox{ , } \mathsf{sup}_x \mbox{ and } \mathsf{inf}_x . 
$$ 
A {\em theory} is a set of $L$-conditions without free variables 
(here $\mathsf{sup}_x$ and $\mathsf{inf}_x$ play the role of quantifiers). 
   
It is worth noting that any formula is a $\gamma$-uniformly continuous 
function from the appropriate power of $M$ to $[0,1]$, 
where $\gamma$ is the minimum of continuity moduli of $L$-symbols 
appearing in the formula. 

The condition that the metric is bounded by $1$ is not necessary. 
It is often assumed that $d$ is bounded by some rational number $d_0$. 
In this case the (truncated) functions above are appropriately modified.    

We sometimes replace conditions of the form $\phi \dot{-} \varepsilon =0$ 
where $\varepsilon \in [0,d_0]$ by more convenient expressions $\phi \le \varepsilon$. 

In several places of the paper we use continuous 
$L_{\omega_1 \omega}$-logic. 
It extends the first-order logic by new connectives applied to 
countable families of formulas : $\bigvee$ is the infinitary 
$\mathsf{min}$ and $\bigwedge$ corresponds to the infinitary $\mathsf{max}$.  
When we apply these connectives we only demand that the formulas of the family all obey the same continuity modulus, 
see \cite{BYI}.

\subsection{ Metric groups} 
Below we always assume that our metric groups are 
continuous structures with respect 
to bi-invariant metrics (see \cite{BYBHU}). 
This exactly means that $(G,d)$ is 
a complete metric space and $d$ is bi-invariant. 
Note that the continuous logic approach 
takes weaker assumptions on $d$. 
Along with completeness it is only assumed 
that the operations  of a structure 
are uniformly continuous with respect to $d$. 
Thus it is worth noting here that 
\begin{itemize} 
\item any group which is a continuous structure has 
an equivalent bi-invariant metric. 
\end{itemize} 
Indeed assuming that $(G,d)$ is a continuous metric group 
which is not discrete one can apply the following 
function: 
$$ 
d^* (x,y) = sup_{u,v} d(u\cdot x\cdot v, u\cdot y\cdot v). 
$$ 
See Lemma 2 and Proposition 4 of \cite{sasza2} for further discussions concerning  this observation. 

\subsection{The approach} 
In Section 3 we apply the recent paper \cite{SchneiderThom} 
for $L_{\omega_1 \omega}$-axiomatization of amenability/non-amenability  
of metric groups.  
The case of property {\bf (T)} looks slightly more complicated, 
because unbounded metric spaces are involved in the definition. 
 
Typically unbounded metric spaces are considered in 
continuous logic as many-sorted structures of 
$n$-balls of a fixed point of the space ($n\in \omega$). 
Section 15 of \cite{BYBHU} contains nice examples of 
such structures. 
If the action of a bounded metric group $G$ is isometric 
and preserves these balls we may consider 
the action as a sequence of binary operations where  
the first argument corresponds to $G$.  
In such a situation one just fixes 
a sequence of continuity moduli for $G$ 
(for each $n$-ball). 
We will see in Section 4 that this approach  
works sufficiently well for the negation of property ${\bf (T)}$ 
(non-${\bf (T)}$)  in the class of locally compact groups.  

It is well-known that a locally compact group with property 
{\bf (T)} is amenable if and only if it is compact. 
Thus it is natural to consider these properties together. 

\subsection{Uniform continuity} 
Actions of metric groups which can be analyzed by 
tools of continuous logic must be uniformly continuous 
for each sort appearing in the presentation of the space by metric balls. 
This slightly restricts the field of applications.

\subsection{Hilbert spaces in continuous logic } \label{hs} 
We treat a Hilbert space over $\mathbb{R}$ 
exactly as in Section 15 of \cite{BYBHU}. 
We identify it with a many-sorted metric structure 
$$
(\{ B_n\}_{n\in \omega} ,0,\{ I_{mn} \}_{m<n} ,
\{ \lambda_r \}_{r\in\mathbb{R}}, +,-,\langle \rangle ),
$$
where $B_n$ is the ball of elements of norm $\le n$, 
$I_{mn}: B_m\rightarrow B_n$ is the inclusion map, 
$\lambda_{r}: B_m\rightarrow B_{km}$ is scalar 
multiplication by $r$, with $k$ the unique integer 
satisfying $k\ge 1$ and $k-1 \le |r|<k$; 
furthermore, $+,- : B_n \times B_n \rightarrow B_{2n}$ 
are vector addition and subtraction and 
$\langle \rangle : B_n \rightarrow [-n^2 ,n^2 ]$ 
is the predicate of the inner product. 
The metric on each sort is given by 
$d(x,y) =\sqrt{ \langle x-y, x-y \rangle }$.   
For every operation the continuity modulus is standard.  
For example in the case of $\lambda_r$ this is $\frac{z}{|r|}$. 
Note that in this version of continuous logic 
we do not assume that the diameter of a sort is bounded by 1. 
It can become any natural number.  

Stating existence of infinite approximations of orthonormal bases 
(by a countable family of axioms, see Section 15 of \cite{BYBHU}) 
we assume that our Hilbert spaces are infinite dimensional. 
By \cite{BYBHU} they form the class of models of a complete 
theory which is $\kappa$-categorical for all infinite $\kappa$, 
and admits elimination of quantifiers. 

This approach can be naturally extended to complex Hilbert spaces, 
$$
(\{ B_n\}_{n\in \omega} ,0,\{ I_{mn} \}_{m<n} ,
\{ \lambda_c \}_{c\in\mathbb{C}}, +,-,\langle \rangle_{Re} , \langle \rangle_{Im} ). 
$$
We only extend the family 
$\lambda_{r}: B_m\rightarrow B_{km}$, $r\in \mathbb{R}$, 
to a family $\lambda_{c}: B_m\rightarrow B_{km}$, $c\in \mathbb{C}$, 
of scalar products by $c\in\mathbb{C}$, with $k$ 
the unique integer satisfying $k\ge 1$ and $k-1 \le |c|<k$. 
We also introduce $Re$- and $Im$-parts of the inner product.

\section{Metric groups and amenability}   
Although closed subgroups of amenable 
locally compact groups are amenable, 
amenability is not preserved under elementary extensions. 
For example there are locally finite countable groups 
having elementary extensions containing free groups 
(i.e. in the discrete case amenability is not axiomatizable). 
In this section we apply the description of amenable topological groups
found by F.M. Schneider and A. Thom in \cite{SchneiderThom}  in order to axiomatize in $L_{\omega_1 \omega}$
amenability for metric groups which are continuous structures. 
In fact we will see that this property is $\mathsf{sup} \bigvee \mathsf{inf}$ 
in terms of \cite{Gold} 
\footnote{in the discrete case this was observed in \cite{Goldbring}}. 
This kind of axiomatization is essential in model-theoretic forcing, 
see Proposition 2.6 in \cite{Gold}.     
Our results imply that (typical) elementary substructures 
of (non-) amenable groups are (non-) amenable. 
In particular these properties are bountiful, see \cite{saszaArX}. 

Let $G$ be a topological group, $F_1 ,F_2\subset G$ are finite and $U$ be an identity neighbourhood. 
Let $R_U$ be a binary relation defined as follows: 
$$
R_U=\{ (x,y) \in F_1\times F_2 : yx^{-1} \in U \} . 
$$ 
This relation defines a bipartite graph on $(F_1 , F_2 )$. 
Let 
$$
\mu (F_1 ,F_2 ,U) = |F_1 | - \mathsf{sup} \{ |S| - |N_R (S)| : S\subseteq F_1 \} , 
 $$ 
where $N_R (S) = \{ y\in F_2 : (\exists x\in S) (x,y)\in R_U \}$. 
By Hall's matching theorem this value is the {\em  matching number} of the graph $(F_1 , F_2 , R_U )$. 
Theorem 4.5 of \cite{SchneiderThom} 
gives the following description of amenable topological groups. 
\begin{quote} 
{\em Let $G$ be a Hausdorff topological group. The following are equivalent. \\ 
(1) $G$ is amenable. \\
(2) For every $\theta \in (0,1)$, every finite subset $E\subseteq G$, and every identity neighbourhood $U$, 
there is a finite non-empty subset $F\subseteq G$ such that 
$$
\forall g\in E (\mu (F,gF,U) \ge \theta |F|). 
$$ 
(3) There exists $\theta \in (0,1)$ such that for every finite subset $E\subseteq G$, and every identity neighbourhood $U$, 
there is a finite non-empty subset $F\subseteq G$ such that 
$$
\forall g\in E (\mu (F,gF,U) \ge \theta |F|). 
$$
}
\end{quote} 
It is worth noting here that when an open neighbourhood $V$ contains $U$ 
the number  $\mu (F,gF,U)$ does not exceed $\mu (F,gF,V)$. 
In particular in the formulation above we may consider neighbourhoods $U$ 
from a fixed base of identity neighbourhoods. 
For example in the case of a continuous structure $(G, d)$ with an invariant $d$ 
we may take all $U$ in the form of metric balls 
$B_{<q} = \{ x : d(1,x) < q\}$, $q\in \mathbb{Q}\cap (0,1)$.  
It is also clear that we can restrict all $\theta$ by rational ones. 
From now on we work in this case. 

In Lemma \ref{amenlem} we consider the Schneider-Thom theorem in the formulation where all $U$ are closed balls 
$B_q = \{ x : d(1,x) \le q\}$, $q\in \mathbb{Q}\cap (0,1)$.  
Notice that the corresponding versions of statement (2) above 
are equivalent for $U$ of the form $B_{<q}$ and of the form $B_q$. 
Indeed this follows from the observation that 
 $\mu (F,gF,B_{<q})\le \mu (F,gF,B_q )$ and 
  $\mu (F,gF,B_q )\le \mu (F,gF,B_{<r})$ for $q<r$.

\begin{lemma} \label{amenlem} 
Given $k\in \mathbb{N}$ and rational numbers 
$q, \theta\in (0,1)$ there is a quantifier free formula 
$\phi_{k,q, \theta} (\bar{x},y)$ depending on variables 
$x_1 ,\ldots ,x_k$ and $y$ such that in the structure 
$(G,d)$ the $0$-statement 
$\phi_{k,q, \theta} (\bar{x},y) \le 0$  is equivalent 
to the condition that $x_1 ,\ldots ,x_k$ form a set $F$ 
with $\mu (F,yF,B_q ) \ge \theta |F|$.  

Moreover the identity function is a continuity modulus of $y$ 
in $\phi_{k,q, \theta} (\bar{x},y)$. 
\end{lemma}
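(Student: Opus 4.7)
The plan is to encode the matching-number condition $\mu(F, yF, B_q) \ge \theta |F|$ as a finite quantifier-free $\mathsf{max}$-$\mathsf{min}$ expression in the pairwise distances $d(x_i, yx_j)$. First I would use bi-invariance of $d$ to restate the edge relation ``$yx_j x_i^{-1} \in B_q$'' as ``$d(x_i, yx_j) \le q$''. Then, with $|F| = k$, Hall's matching theorem rephrases $\mu(F, yF, B_q) \ge \theta k$ as: for every subset $S \subseteq [k]$, $|S| - |N_R(S)| \le (1-\theta)k$, where $N_R(S) = \{j \in [k] : \exists i \in S,\; d(x_i, yx_j) \le q\}$.

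Next I would pass to the dual form: the condition fails precisely when some pair $(S, N)$ of subsets of $[k]$ satisfies $|S| - |N| > (1-\theta)k$ together with $N_R(S) \subseteq N$ (since among supersets of $N_R(S)$ the difference $|S|-|N|$ is largest at $N = N_R(S)$). The inclusion $N_R(S) \subseteq N$ means that for every $i \in S$ and $j \in [k] \setminus N$, $d(x_i, yx_j) > q$, i.e., $\mathsf{min}_{i \in S, j \in [k] \setminus N} d(x_i, yx_j) > q$. Accordingly I would define
\[
\phi_{k,q,\theta}(\bar{x}, y) \;=\; \mathsf{max}_{(S, N)} \;\mathsf{min}_{\substack{i \in S \\ j \in [k] \setminus N}} \bigl( d(x_i, y x_j) \dot{-} q \bigr),
\]
where the outer $\mathsf{max}$ is taken over the finite collection of pairs $(S, N) \in \mathcal{P}([k]) \times \mathcal{P}([k])$ with $|S| - |N| > (1-\theta)k$; this is a quantifier-free $L$-formula. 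The equivalence ``$\phi \le 0$ iff the matching condition holds'' is then direct: each inner $\mathsf{min}$ equals $0$ exactly when some edge from $S$ into $[k] \setminus N$ exists, so $\phi \le 0$ iff no bad pair $(S, N)$ witnesses a violation, which is precisely the reformulation above.

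Finally, for the modulus-of-continuity claim I would use bi-invariance once more: $d(x_i, y x_j) = d(x_i x_j^{-1}, y)$, so each building-block distance is $1$-Lipschitz in $y$. Since the truncated subtraction $\dot{-}\, q$ and the finitary $\mathsf{min}$, $\mathsf{max}$ all preserve $1$-Lipschitz continuity, $\phi_{k,q,\theta}$ is $1$-Lipschitz in $y$ and the identity function serves as a modulus. The main conceptual step is the translation of Hall's condition into the $(S,N)$-max-min form controlled by a single minimum of distances, together with the observation that $N$ may be taken to range freely rather than only over supersets of $N_R(S)$; after this reformulation the formula construction and the modulus calculation are essentially mechanical.
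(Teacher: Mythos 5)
Your proof is correct, and it follows the same overall strategy as the paper's: unwind the definition of $\mu$ into the per-subset condition $|N_R(S)|\ge |S|-(1-\theta)k$, encode that as a finite quantifier-free $\mathsf{max}$--$\mathsf{min}$ combination of the truncated distances $d(x_i,yx_j)\dot{-}q$, and get the identity modulus for $y$ from invariance of $d$ plus the triangle inequality. The one genuine difference is the combinatorial bookkeeping in the encoding. The paper certifies the lower bound on $|N_R(S)|$ \emph{positively}: for each $S$ it ranges over subsets $S'\subseteq\{yx_1,\ldots,yx_k\}$ of size at least $|S|-k+\theta k$ together with a choice function $\rho:S'\to S$, and asserts via a $\mathsf{min}$ over $(S',\rho)$ of $\mathsf{max}$-formulas that some such witness has all its distances $\le q$ (so $S'\subseteq N_R(S)$). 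You instead bound $N_R(S)$ \emph{from above}: you range over candidate supersets $N$ with $|S|-|N|>(1-\theta)k$ and express $N_R(S)\subseteq N$ as a single $\mathsf{min}$ of distances exceeding $q$, which the condition $\phi\le 0$ then rules out. Your dual encoding is arguably cleaner (no choice functions $\rho$, a flatter formula), at the cost of the small observation — which you correctly supply — that $N$ may range over all sets of the prescribed size rather than only over supersets of $N_R(S)$. You also correctly handle the implicit edge cases ($S=\emptyset$ or $N=[k]$ are excluded by the size constraint, so no empty $\mathsf{min}$ arises). Both formulas are quantifier-free, take values in $[0,1]$, and are $1$-Lipschitz in $y$ for the same reason, so either establishes the lemma.
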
 

\begin{proof} 
To guarantee the inequality $\mu (F,gF,B_q ) \ge \theta |F|$  
for an $F= \{ f_1 ,\ldots , f_k \}$ 
we only need to demand that for every 
$S\subseteq F$ the following inequality holds: 
$$ 
| S| - k + \theta \cdot k \le |N_R (S)| , 
$$
where  $N_R (S)$ is defined with respect to $(F, gF)$ 
and $U=B_q$. 

To satisfy this inequality we will use the observation that 
when $S'\subseteq gF$ and $\rho$ is a function 
$S' \rightarrow S$ such that 
$\mathsf{max} \{ d (g f ,\rho (g f )) : g f \in S' \} \le q$ then $|S'| \le |N_R (S ))|$. 
Let us assume that $S$ corresponds to some tuple 
$x_{i_1}, \ldots , x_{i_l}$ of elements of 
$\{ x_1 ,\ldots ,x_k \}$, 
the subset $S'$ corresponds to some tuple of terms  
from $\{ y x_1 ,\ldots ,y x_k \}$ (recovered by $\rho^{-1}$) 
and let 
$$ 
dist_{S,S',\rho }(x_1 , \ldots x_k, y) = \mathsf{max} \{ d (yx_i ,\rho (yx_i )) : yx_i \in S' \} 
$$ 
(a $\mathsf{max}$-formula of continuous logic).  
Then the statement formalizing $|S'| \le |N_R (S ))|$ can be expressed that the formula 
$dist_{S,S',\rho }(x_1 , \ldots x_k, y)$
takes value $\le q$ with respect to the realization 
of $\bar{x}$ by the tuple $f_1 , \ldots f_k$ 
and $y$ by $g$.   

Thus the following formula $\phi_{k,q, \theta} (\bar{x},y)$ satisfies the statement of the lemma: 
$$
\mathsf{max}_{S\subseteq \{ x_1, \ldots , x_k \}} 
\mathsf{min} \{    dist_{S,S',\rho }(x_1 , \ldots x_k, y)
: S' \subseteq \{ y x_1 ,\ldots ,y x_k \} 
\mbox{ , }
\rho :S' \rightarrow S \mbox{ , }  
$$
$$ 
| S| - k + \theta \cdot k \le |S'| \} \dot{-} q . 
$$ 

To see the last statement of the lemma it suffices to notice 
that the identity function is a continuity modulus of $y$ in each  $dist_{S,S',\rho }(x_1 , \ldots x_k, y)$. 
The latter follows from the definition of 
$dist_{S,S',\rho }(x_1 , \ldots x_k, y)$ and the fact that 
$d$ is invariant and satisfies the triangle inequality.  
\end{proof}

\bigskip

\begin{theorem} \label{thmamen} 
The class of all amenable groups which are continuous structures 
with invariant metrics, is axiomatizable by all $L_{\omega_1 \omega}$-statements 
of the following form: 
$$ 
\mathsf{sup}_{y_1 \ldots y_l} 
\bigvee \{ 
\mathsf{inf}_{x_1 \ldots x_k } 
\mathsf{max} \{ \phi_{k,q, \theta} (\bar{x},y_i) : 1\le i\le l\} : k\in \omega \} \le 0, 
$$ 
$$ \mbox{ where } \theta , q \in \mathbb{Q}\cap (0,1) \mbox{ and } 
 l \in \omega .  
$$ 
In particular every first order elementary substructure 
of a continuous structure which is an amenable group is also an amenable group.  
\end{theorem}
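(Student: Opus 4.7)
The plan is to use the Schneider--Thom characterization of amenability (Theorem 4.5 of \cite{SchneiderThom}, quoted above) as the bridge between the group-theoretic property and a combinatorial condition on finite subsets, and then read off the required quantifier structure via Lemma \ref{amenlem}.

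For the forward direction I fix $\theta, q \in \mathbb{Q} \cap (0,1)$, $l \in \omega$, and an arbitrary tuple $y_1, \ldots, y_l$ in an amenable continuous metric group $G$. Applying condition (2) of Schneider--Thom with $E = \{y_1,\ldots,y_l\}$ and $U = B_q$ yields a finite non-empty $F = \{f_1,\ldots,f_k\}$ with $\mu(F, y_iF, B_q) \ge \theta |F|$ for each $i$. Lemma \ref{amenlem} then gives $\phi_{k,q,\theta}(\bar{f}, y_i) \le 0$ for each $i$, so $\mathsf{inf}_{\bar{x}}\, \mathsf{max}_i \phi_{k,q,\theta}(\bar{x}, y_i) \le 0$ is witnessed by $\bar{f}$; the $\bigvee_k$ disjunction and, in turn, the outer $\mathsf{sup}_{\bar{y}}$ are $\le 0$ as well.

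For the reverse direction I verify Schneider--Thom (2) in the open-ball formulation. Given $\theta \in \mathbb{Q} \cap (0,1)$, $E = \{y_1,\ldots,y_l\}$, and $U = B_{<q}$ with $q \in \mathbb{Q} \cap (0,1)$, I pick a rational $q' < q$. The axiom with parameters $(\theta, q')$ evaluated at $(y_1,\ldots,y_l)$ gives $\bigvee_k \mathsf{inf}_{\bar{x}}\, \mathsf{max}_i \phi_{k,q',\theta}(\bar{x}, y_i) \le 0$, so for each $\varepsilon > 0$ there exist $k$ and $\bar{f}$ with $\phi_{k,q',\theta}(\bar{f}, y_i) < \varepsilon$ for every $i$. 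Unfolding the trailing $\dot{-}\, q'$ in $\phi_{k,q',\theta}$ as in the proof of Lemma \ref{amenlem} yields $\mu(F, y_iF, B_{q'+\varepsilon}) \ge \theta |F|$ with $F = \{f_1,\ldots,f_k\}$ (after, if necessary, pruning repetitions among the $f_j$ and correspondingly adjusting $k$). Choosing $\varepsilon$ so small that $q' + \varepsilon < q$ and invoking the inequality $\mu(F,gF,B_{q'+\varepsilon}) \le \mu(F,gF,B_{<q})$ from the paragraph preceding Lemma \ref{amenlem} furnishes the required $F$.

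The final clause follows by inspection of the outer shape $\mathsf{sup}_{\bar{y}} \bigvee_k \psi_k(\bar{y}) \le 0$ of each axiom, where $\psi_k(\bar{y}) := \mathsf{inf}_{\bar{x}}\,\mathsf{max}_i \phi_{k,q,\theta}(\bar{x}, y_i)$ is a genuine first-order continuous formula. If $H \prec G$ first-order elementarily, $G$ is amenable, and $\bar{y} \in H$, then for each $\varepsilon > 0$ the axiom in $G$ supplies some $k$ with $\psi_k^G(\bar{y}) < \varepsilon$; first-order elementarity then gives $\psi_k^H(\bar{y}) = \psi_k^G(\bar{y}) < \varepsilon$, hence $\bigvee_k \psi_k^H(\bar{y}) = 0$ and the axiom holds in $H$. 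The main obstacle I anticipate is managing the $\varepsilon$-slack introduced by the continuous $\mathsf{inf}$ and by the truncation $\dot{-}\, q$ inside $\phi_{k,q,\theta}$; the rational $q' < q$ trick together with the open/closed ball observation recalled above is precisely what absorbs this slack.
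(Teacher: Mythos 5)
Your proposal is correct and follows essentially the same route as the paper: both directions go through the Schneider--Thom criterion via Lemma \ref{amenlem}, with the same rational $q'<q$ slack to absorb the $\mathsf{inf}$ and the $\dot{-}$ truncation, and the final clause rests on the fact that each disjunct is a genuine first-order formula whose value is preserved by elementary substructures. The only cosmetic difference is that you verify the axioms in the substructure and then reapply the reverse direction, whereas the paper transfers the witnesses $f_1,\ldots,f_k$ directly into $G_1$; these are the same argument.
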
 

\begin{proof} 
The tuple $y_1 ,\ldots ,y_l$ consists of all free variables of 
the formula 
$$
\mathsf{inf}_{x_1 \ldots x_k } \mathsf{max} \{ \phi_{k,q, \theta} (\bar{x},y_i) : 1\le i\le l\} . 
$$ 
By the last statement of Lemma \ref{amenlem} the identity function is a continuity modulus of 
each $y_i$ in this formula. 
This implies the same statement concerning the infinite disjunction in the formulation. 
Thus the formula in the formulation belongs to $L_{\omega_1 \omega}$. 

By Theorem 4.5 of \cite{SchneiderThom} and the discussion after 
the formulation of that theorem above we see  
that all amenable groups satisfy the statements in the formulation. 

Let us prove the contrary direction. 
Let $(G,d)$ satisfy the axioms from the formulation. 
We want to apply condition (2) of Theorem 4.5 of \cite{SchneiderThom} in the case 
of balls $B_q$.  
Fix $\theta, q$ and $E$ as in the formulation so that $E=\{ g_1 ,\ldots ,g_l \}$. 
Choose $q' \in \mathbb{Q}\cap (0,1)$ so that $q'<q$. 
Since $(G,d)$ satisfies 
$$ 
\mathsf{sup}_{y_1 \ldots y_l} 
\bigvee \{ 
\mathsf{inf}_{x_1 \ldots x_k } 
\mathsf{max} \{ \phi_{k,q', \theta} (\bar{x},y_i) : 1\le i\le l\} : k\in \omega \} \le 0, 
$$ 
we find $f_1 ,\ldots ,f_k \in G$ so that 
$$ 
\mathsf{max} \{ \phi_{k,q', \theta} (\bar{f},g_i) : 1\le i\le l\}  < q-q'.  
$$ 
By the definition of the formula $\phi_{k,q', \theta} (\bar{f},g_i)$ 
we see that condition (2) of Theorem 4.5 of \cite{SchneiderThom} 
holds for $\theta$, $U=B_q$ and $E$. 
This proves that $(G,d)$ is amenable. 

To see the last statement of the theorem assume that 
$(G,d)$ is amenable and $(G_1 ,d)\preceq (G,d)$. 
We repeat the argument of the previous paragraph for $E\subseteq G_1$. 
Then having found $f_1 ,\ldots ,f_k \in G$ as above 
we can apply the definition of an elementary substructure 
in order to obtain $f'_1 ,\ldots ,f'_k \in G_1$ so that 
$$ 
\mathsf{max} \{ \phi_{k,q', \theta} (\bar{f}',g_i) : 1\le i\le l\}  < q-q'.  
$$ 
The rest is clear. 
\end{proof} 

\bigskip

To have a similar theorem for non-amenability we need the following lemma. 

\begin{lemma} \label{namenlem} 
Given $k\in \mathbb{N}$ and rational numbers 
$q, \theta\in (0,1)$ there is a quantifier free formula 
$\phi^{-}_{k,q, \theta} (\bar{x},y)$ depending on variables 
$x_1 ,\ldots ,x_k$ and $y$ such that in the structure 
$(G,d)$ the $0$-statement 
$\phi_{k,q, \theta} (\bar{x},y) \le 0$  is equivalent 
to the condition that $x_1 ,\ldots ,x_k$ form a set $F$ 
with $\mu (F,yF,B_{<q} ) < \theta |F|$.  

Moreover the identity function is a continuity modulus of $y$ 
in $\phi^{-}_{k,q, \theta} (\bar{x},y)$. 
\end{lemma}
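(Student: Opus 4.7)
The plan is to mirror the proof of Lemma \ref{amenlem} with two adjustments: we negate the matching-number inequality, and we use the open balls $B_{<q}$ in place of the closed balls $B_q$. First I would unpack the condition $\mu(F, yF, B_{<q}) < \theta k$ via the definition of $\mu$: this is equivalent to the existence of some $S \subseteq F = \{x_1, \ldots, x_k\}$ with $|S| - |N_R(S)| > (1-\theta)k$, i.e., $|N_R(S)| < |S| - k + \theta k$. Since $|N_R(S)|$ is a nonnegative integer and the right-hand side is rational, this strict inequality is equivalent to $|N_R(S)| \le M_{|S|}$, where $M_j := \lceil j - k + \theta k \rceil - 1$ is the largest integer strictly below $j - k + \theta k$. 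In turn, this says that at least $n_{|S|} := k - M_{|S|}$ of the elements $yx_1, \ldots, yx_k$ lie outside $N_R(S)$; for $S$ with $n_{|S|} > k$ (equivalently $M_{|S|} < 0$) the condition cannot hold and can be dropped from the disjunction below.

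Next I would translate ``$yx_i \notin N_R(S)$'' into an atomic continuous expression. Using bi-invariance of $d$, $(x_j, yx_i) \in R_{B_{<q}}$ iff $d(yx_i, x_j) < q$, so $yx_i \notin N_R(S)$ iff $d(yx_i, x_j) \ge q$ for every $x_j \in S$, which is captured by $\mathsf{max}_{x_j \in S}(q \dot{-} d(yx_i, x_j)) = 0$. Assembling the disjunction over admissible $S \subseteq \{x_1, \ldots, x_k\}$ and over index sets $T \subseteq \{1, \ldots, k\}$ with $|T| = n_{|S|}$, I obtain
$$
\phi^{-}_{k,q,\theta}(\bar x, y) \;:=\; \mathsf{min}_{S} \, \mathsf{min}_{T:\, |T| = n_{|S|}} \, \mathsf{max}_{i \in T,\; x_j \in S}\,(q \dot{-} d(yx_i, x_j)).
$$
This quantifier-free formula satisfies $\phi^{-}_{k,q,\theta}(\bar x, y) \le 0$ precisely when an admissible pair $(S, T)$ exists, which by the above bookkeeping is equivalent to $\mu(F, yF, B_{<q}) < \theta k$.

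The continuity modulus claim is routine: by bi-invariance and the triangle inequality, $y \mapsto d(yx_i, x_j)$ is $1$-Lipschitz, and applying $q \dot{-} (\cdot)$ together with finitely many $\mathsf{min}$'s and $\mathsf{max}$'s preserves this property, so the identity is a continuity modulus of $y$ in $\phi^{-}_{k,q,\theta}$. The only delicate point in the argument is the bookkeeping around the strict versus non-strict integer inequality $|N_R(S)| < |S| - k + \theta k$: one must verify that the integer constant $n_{|S|}$, defined via the ceiling, correctly encodes strictness so that the open-ball condition (with $B_{<q}$), rather than the closed-ball one, is the one actually captured by $\phi^{-}_{k,q,\theta} \le 0$.
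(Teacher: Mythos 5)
Your proposal is correct and follows essentially the same route as the paper: you certify the bound $|N_R(S)| < |S| - k + \theta k$ by exhibiting enough elements $yx_i$ at distance $\ge q$ from all of $S$, encoded via $q \dot{-} d(yx_i, x_j)$ inside finite $\mathsf{min}$/$\mathsf{max}$ combinations, with the same Lipschitz argument for the continuity modulus. The only difference is bookkeeping: the paper parametrizes by a small set $S' \supseteq N_R(S)$ while you parametrize by its complement $T$ of prescribed size $n_{|S|}$ (making the strict-inequality-to-integer conversion explicit via the ceiling), and these yield logically equivalent formulas.
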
 

\begin{proof} 
To guarantee the inequality $\mu (F,gF,B_{<q} ) < \theta |F|$  
for an $F= \{ f_1 ,\ldots , f_k \}$ 
we only need to demand that there is 
$S\subseteq F$ such that  
$$ 
|N_R (S)| < | S| - k + \theta \cdot k , 
$$
where  $N_R (S)$ is defined with respect to $(F, gF)$ 
and $U=B_{<q}$.   

To formalize this inequality we will use the observation that 
when $S'\subseteq gF$ satisfies the inequality 
$$
q \le \mathsf{inf} \{ d (g f , f' )) : f'\in S\mbox{ and }
g f \not\in S' \} , 
$$ 
then $|S'| \ge |N_R (S ))|$. 
Thus if we associate to $S$ and $S'$ some tuple of elements of 
$\{ x_1 ,\ldots ,x_k \}$ and some tuple of terms  
from $\{ y x_1 ,\ldots ,y x_k \}$ respectively,  
then the statement $|S'| \ge |N_R (S ))|$ can be expressed 
that the formula 
$$ 
\overline{dist}_{S,S'}(x_1 , \ldots x_k, y) = \mathsf{inf} \{ d (yx_i ,x_j )) : x_j \in S \mbox{ and } yx_i \not\in S' \} 
$$ 
takes value $\ge q$ with respect to the realization 
of $\bar{x}$ by the tuple $f_1 , \ldots f_k$ 
and $y$ by $g$. 

Thus the following formula $\phi^{-}_{k,q, \theta} (\bar{x},y)$ satisfies the statement of the lemma. 

$$
\mathsf{min}_{S\subseteq \{ x_1, \ldots , x_k \}} 
\mathsf{min} \{  q\dot{-}  
\overline{dist}_{S,S' }(x_1 , \ldots x_k, y) : 
S' \subseteq \{ y x_1 ,\ldots ,y x_k \} 
\mbox{ , }  
$$
$$ 
|S'| < | S| - k + \theta \cdot k  \} . 
$$ 
The statement that the identity function is a continuity modulus of $y$ in the formula $\phi^{-}_{k,q, \theta} (\bar{x},y)$ 
follows from the definition of this formula and the 
assumption that $d$ is an invariant metric.  
\end{proof}

\bigskip

\begin{theorem} 
The class of all non-amenable groups which are continuous structures 
with invariant metrics, is axiomatizable by all $L_{\omega_1 \omega}$-statements 
of the following form: 
$$ 
\bigvee_q \bigvee_{l\in \omega}\{ \mathsf{inf}_{y_1 \ldots y_l} 
\bigwedge \{ 
\mathsf{sup}_{x_1 \ldots x_k } 
\mathsf{min} \{ \phi^{-}_{k,q, \theta} (\bar{x},y_i) : 1\le i\le l\} : k\in \omega \}: q \in \mathbb{Q}\cap (0,1)\} \le 0, 
$$ 
$$ \mbox{ where } \theta \in \mathbb{Q}\cap (0,1).  
$$ 
Moreover every subset $A$ of a continuous structure $(G,d)$  
which is a non-amenable group is contained in a  first order 
elementary substructure of $(G,d)$ of density character $\le |A|+\aleph_0$   
which also a non-amenable group.  
\end{theorem}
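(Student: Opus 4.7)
The plan is to mirror the proof of Theorem \ref{thmamen}, dualizing the quantifier pattern because non-amenability corresponds to the \emph{negation} of condition (3) of the Schneider--Thom theorem: at a rational level $\theta\in(0,1)$ this negation asserts that some finite $E=\{g_{1},\ldots,g_{l}\}\subseteq G$ and some identity neighbourhood $U$ have the property that every finite non-empty $F\subseteq G$ admits an index $i$ with $\mu(F,g_{i}F,U)<\theta|F|$. As in the discussion before Lemma \ref{amenlem}, one may restrict $U$ to the countable basis $\{B_{<q}:q\in\mathbb{Q}\cap(0,1)\}$, and since conditions (2) and (3) of Schneider--Thom are equivalent, $G$ is non-amenable iff this negation holds at \emph{every} rational $\theta$ (equivalently at some single rational $\theta$).

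First I would verify that the displayed expression is a legitimate $L_{\omega_{1}\omega}$-sentence. By the last clause of Lemma \ref{namenlem}, each $\phi^{-}_{k,q,\theta}(\bar{x},y_{i})$ admits the identity as a modulus of uniform continuity in $y_{i}$; the operations $\mathsf{min}_{i}$, $\mathsf{sup}_{\bar{x}}$ and the infinitary $\bigwedge_{k}$ preserve this common modulus, which is exactly what is needed to form the countable conjunction, and the outer $\mathsf{inf}_{y_{1}\ldots y_{l}}$ together with $\bigvee_{q,l}$ then act on sentences. For the direction non-amenable $\Rightarrow$ axioms, fix a rational $\theta$ and apply the Schneider--Thom theorem to obtain a rational $q$ together with witnesses $g_{1},\ldots,g_{l}\in G$ such that every finite $F=\{f_{1},\ldots,f_{k}\}$ satisfies $\mu(F,g_{i}F,B_{<q})<\theta|F|$ for some $i$. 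By Lemma \ref{namenlem} this reads as $\phi^{-}_{k,q,\theta}(\bar{f},g_{i})=0$, so $\mathsf{sup}_{\bar{x}}\mathsf{min}_{i}\phi^{-}_{k,q,\theta}(\bar{x},g_{i})=0$ for every $k$, and the axiom at level $\theta$ is witnessed by $(q,l,g_{1},\ldots,g_{l})$.

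For the converse, suppose $(G,d)$ satisfies the axiom for some rational $\theta$, so that $\mathsf{inf}_{y_{1}\ldots y_{l}}\bigwedge_{k}\mathsf{sup}_{\bar{x}}\mathsf{min}_{i}\phi^{-}_{k,q,\theta}(\bar{x},y_{i})=0$ for suitable $q,l$. Pick any rational $q'\in(0,q)$, set $\varepsilon:=q-q'$, and choose $g_{1},\ldots,g_{l}\in G$ realising the infimum to within $\varepsilon$. Unwinding the definition of $\phi^{-}_{k,q,\theta}$ and using bi-invariance of $d$, the bound $\phi^{-}_{k,q,\theta}(\bar{f},g_{i})\le\varepsilon$ yields admissible $S,S'$ with $\overline{dist}_{S,S'}(\bar{f},g_{i})\ge q'$, hence $\mu(F,g_{i}F,B_{<q'})<\theta|F|$. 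This is the negation of Schneider--Thom's (3) at level $\theta$ with $E=\{g_{1},\ldots,g_{l}\}$ and $U=B_{<q'}$, so $(G,d)$ is non-amenable.

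For the moreover clause, assume $(G,d)$ is non-amenable and $A\subseteq G$. For each rational $\theta\in(0,1)$ fix witnesses $q(\theta),l(\theta)$ and $g^{\theta}_{1},\ldots,g^{\theta}_{l(\theta)}\in G$ as in the forward direction, and set $A_{0}:=A\cup\bigcup_{\theta}\{g^{\theta}_{1},\ldots,g^{\theta}_{l(\theta)}\}$; then $|A_{0}|\le|A|+\aleph_{0}$. The downward L\"owenheim--Skolem theorem for continuous first-order logic produces an elementary substructure $(G_{1},d)\preceq(G,d)$ of density character at most $|A|+\aleph_{0}$ containing $A_{0}$. Since $G_{1}$ is closed under the group operations and $\phi^{-}_{k,q,\theta}$ is quantifier-free, the chosen witnesses still satisfy $\mathsf{sup}_{\bar{x}\in G_{1}^{k}}\mathsf{min}_{i}\phi^{-}_{k,q(\theta),\theta}(\bar{x},g^{\theta}_{i})=0$, so the axiom at level $\theta$ holds inside $(G_{1},d)$ and the converse direction above shows $(G_{1},d)$ is non-amenable. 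The main obstacle is precisely that converse: the infimum in the axiom need not be attained, so one has to shrink $B_{<q}$ to $B_{<q'}$ (relying on the freedom to choose $U$ after $\theta$ in Schneider--Thom) and carefully invoke bi-invariance of $d$ to convert the approximate inequality $\phi^{-}\le\varepsilon$ into the exact combinatorial bound $\mu<\theta|F|$.
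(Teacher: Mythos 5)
Your argument is correct and follows essentially the same route as the paper: the same reduction via Lemma \ref{namenlem}, the same use of the Schneider--Thom characterization in both directions, the same device of shrinking $B_{<q}$ to $B_{<q'}$ to absorb the error from non-attained infima, and downward L\"owenheim--Skolem for the moreover clause (where you collect witnesses for every rational $\theta$ while the paper gets by with a single $\theta$ from the failure of condition (2) --- both give the required countability bound). The only slip is at the start of your converse: $\bigvee_{q,l}(\cdots)\le 0$ only yields, for each $\varepsilon>0$, \emph{some} $q,l$ at which the disjunct is $\le\varepsilon$, not an exact zero disjunct; this extra slack folds harmlessly into the $\varepsilon=q-q'$ you already carry, and is precisely how the paper words that step.
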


\begin{proof} 
As in the proof of Theorem \ref{thmamen} one can show that the 
formula from the formulation of the theorem satisfies the requirements to 
be an $L_{\omega_1 \omega}$-formula of continuous logic. 

By Theorem 4.5 (3) of \cite{SchneiderThom} and the discussion after 
the formulation of that theorem above we see  
that all non-amenable groups satisfy the statements in the formulation. 

Let us prove the contrary direction. 
Let $(G,d)$ satisfy the axioms from the formulation. 
To apply condition (3) of Theorem 4.5 of \cite{SchneiderThom} 
fix $\theta \in \mathbb{Q} \cap (0,1)$. 
For every $\varepsilon >0$ one can choose $q$ and $E=\{ g_1 ,\ldots ,g_l \}$  
so that $(G,d)$ satisfies 
$$  
\bigwedge \{ 
\mathsf{sup}_{x_1 \ldots x_k } 
\mathsf{min} \{ \phi^{-}_{k,q, \theta} (\bar{x},g_i) : 1\le i\le l\} : k\in \omega \} \le \varepsilon .  
$$ 
This obviously means that 
$$  
\bigwedge \{ 
\mathsf{sup}_{x_1 \ldots x_k } 
\mathsf{min} \{ \phi^{-}_{k,q-\varepsilon , \theta} (\bar{x},g_i) : 1\le i\le l\} : k\in \omega \} \le  0 .  
$$ 
By the definition of the formula $\phi^{-}_{k,q-\varepsilon , \theta} (\bar{x},g_i)$ 
we see that condition (3) of Theorem 4.5 of \cite{SchneiderThom} 
does not hold for $\theta$, $U=B_{<q-\varepsilon}$ and $E$. 
This proves that $(G,d)$ is not amenable. 

To see the last statement of the theorem assume that 
$(G,d)$ is not amenable. 
Then applying Theorem 4.5 (2) of \cite{SchneiderThom} 
we find $\theta \in (0,1)$, a finite subset $E=\{ g_1 ,\ldots ,g_l \}\subseteq G$, and  
an identity neighbourhood $U = B_{<q}$, 
such that for every finite non-empty subset $F\subseteq G$  
$$
\exists g\in E (\mu (F,gF,U) < \theta |F|). 
$$
Let $(G_1 ,d)\preceq (G,d)$ and $E\subseteq G_1$. 
Then by Lemma \ref{namenlem} 
$$ 
G_1 \models \bigwedge \{ \mathsf{sup}_{x_1 \ldots x_k} 
\mathsf{min} \{ \phi^{-}_{k,q, \theta} (\bar{x},g_i) : 1\le i\le l\}: k\in \omega \} \le 0.  
$$ 
By Theorem 4.5 (2) of \cite{SchneiderThom} the group $(G_1,d)$ 
is not amenable. 
It remains to note that given $A$ as in the formulation above 
by the L\"{o}wenheim-Skolem Theorem 
for continuous logic (\cite{BYBHU}, Proposition 7.3)   
the substructure $G_1$ can be chosen of density character $\le |A|+\aleph_0$ 
with $A\cup E \subset G_1$. 
\end{proof}

\bigskip


\section{Negating (T)} 

It is well-known that a locally compact group with property 
{\bf (T)} of Kazhdan is amenable if and only if it is compact. 
Thus axiomatization of property {\bf (T)} (non-{\bf (T)}) 
is natural in the context of axiomatization of amenability 
(at least for locally compact groups). 
In this section we apply continuous logic to {\bf (T)}/non-{\bf (T)}. 
Our results are partial. 
On the one hand they are restricted to the class of locally compact groups 
and on the other one they mainly concern property non-{\bf (T)}.

\subsection{  Introduction } 
Let a topological group $G$ have a continuous unitary 
representation on a complex Hilbert space ${\bf H}$.  
A closed subset $Q\subset G$ 
has an {\bf $\varepsilon$-invariant unit vector} in ${\bf H}$ if 
$$ 
\mbox{ there exists }v\in {\bf H} \mbox{ such that } 
\mathsf{sup}_{x\in Q} \parallel x\circ v - v\parallel < \varepsilon
\mbox{ and } \parallel v\parallel =1.  
$$ 
A closed subset $Q$ of the group $G$ is called a {\bf Kazhdan set} 
if there is $\varepsilon >0$ with the following property: 
for every unitary representation of $G$ on a Hilbert space 
where $Q$ has an $\varepsilon$-invariant unit vector  
there is a non-zero $G$-invariant vector.  
The following statement is Proposition 1.1.4 from 
\cite{BHV}. 
\begin{quote} 
Let $G$ be a topological group. 
The pair $(G,\sqrt{2} )$ is Kazhdan pair, i.e. 
if a unitary representation of $G$ has a  
$\sqrt{2}$-invariant unit vector then $G$ has a non-zero 
invariant vector.  
\end{quote} 
If the group $G$ has a compact Kazhdan subset then 
it is said that $G$ {\bf has property ${\bf (T)}$ of Kazhdan}. 

Proposition 1.2.1 of \cite{BHV} states that the group $G$ 
has  property  ${\bf (T)}$ of Kazhdan if and only 
if any unitary representation of $G$ which weakly contains 
the unit representation of $G$ in $\mathbb{C}$ 
has a fixed unit vector. 

By Corollary F.1.5 of \cite{BHV} the property that 
the unit representation of $G$ in $\mathbb{C}$ 
is {\bf weakly contained} in a unitary representation $\pi$ 
of $G$ (this is denoted by $1_{G} \prec \pi$) is 
equivalent to the property that for every compact subset 
$Q$ of $G$ and every $\varepsilon >0$  
the set $Q$ has an $\varepsilon$-invariant 
unit vector with respect to $\pi$.

The following example shows that in the first-order logic 
property ${\bf (T)}$ is not elementary: there are two groups 
$G_1$ and $G_2$ which satisfy the same sentences of the 
first-order logic but $G_1 \models {\bf (T)}$ and 
$G_2 \not\models {\bf (T)}$. 

\bigskip

{\bf Example.} 
 Let $n>2$. 
According Example 1.7.4 of \cite{BHV} the group 
$SL_n (\mathbb{Z})$ has property ${\bf (T)}$. 
Let $G$ be a countable elementary extension of 
$SL_n (\mathbb{Z})$ which is not finitely generated. 
Then by Theorem 1.3.1 of \cite{BHV} the group 
$G$ does not have ${\bf (T)}$.  

\bigskip

The main result of this section, Theorem \ref{nT},  
shows that in the context of continuous logic 
the class of unitary representations of locally compact groups 
with property non-{\bf (T)} can be viewed as the union of axiomatizable classes.

\subsection{Unitary representations in continuous logic}
We apply methods announced in the introduction. 
In order to treat axiomatizability question  in the class of 
locally compact groups satisfying some uniform version 
of property non-${\bf (T)}$ we need the preliminaries 
of continuous model theory of Hilbert spaces from Section \ref{hs}. 
Moreover since we want to consider unitary representations of 
metric groups $G$ in continuous logic we should 
fix continuity moduli for the corresponding binary functions 
$G\times B_n \rightarrow B_n$ induced by $G$-actions on 
metric balls of the corresponding Hilbert space.

\begin{remark} 
Continuous unitary actions of $G$ on $B_1$ obviously determine their 
extensions to $\bigcup \{ B_i : i>1 \}$: 
$$
g ( r \cdot x ) = r \cdot g(x) \mbox{ where } x\in B_1 \mbox{ and } 
r\cdot x \in B_n . 
$$ 
Thus a continuity modulus, say $F$, for the corresponding 
function $G\times B_1 \rightarrow B_1$ 
can be considered as a family of continuity 
moduli for $G\times B_i \rightarrow B_i$ as follows: 
$$ 
F_i (\varepsilon ) =  F (\frac{\varepsilon }{i} ). 
$$
Using this observation we simplify the approach by    
considering only continuity moduli 
for $G\times B_1 \rightarrow B_1$. 
When we fix such $F$ we call the corresponding 
continuous unitary action of $G$ an $F$-{\bf continuous} action. 
\end{remark} 

We now define a uniformly  
continuous versions of the notion of a Kazhdan set.

\begin{definition} 
Let $G$ be a metric group of diameter $\le 1$ 
which is a continuous structure in the language  
$(d,\cdot ,^{-1},1)$.  
Let $F$ be a continuity modulus for the $G$-variable of 
continuous functions $G \times B_1 \rightarrow B_1$. 

We call a closed subset $Q$ of the group $G$ 
an $F$-{\bf Kazhdan set} if there is $\varepsilon$ 
with the following property: 
every $F$-continuous unitary representation of $G$ 
on a Hilbert space with $(Q,\varepsilon )$-invariant unit vectors 
also has a non-zero invariant vector.  
\end{definition} 

It is clear that for any continuity 
modulus $F$ a subset $Q\subset G$ 
is $F$-Kazhdan if it is Kazhdan. 
We will say that $G$ has property 
$F$-{\bf non-(T)} if $G$ does not have 
a compact $F$-Kazhdan subset.

To study such actions in continuous logic  
let us consider a class of many-sorted continuous 
metric structures which consist of groups $G$ 
together with metric structures 
of complex Hilbert spaces  
$$ 
(d, \cdot , ^{-1}, 1 ) \cup 
(\{ B_n\}_{n\in \omega} ,0,\{ I_{mn} \}_{m<n} ,
\{ \lambda_c \}_{c\in\mathbb{C}}, +,-,\langle \rangle_{Re} , \langle \rangle_{Im} ).
$$
Such a structure  also contains 
a binary operation $\circ$ of an action which is 
defined by a family of appropriate maps  
$G \times B_m \rightarrow B_{m}$ 
(in fact $\circ$ is presented by a sequence of functions 
$\circ_m$ which agree with respect to all $I_{mn}$). 
When we add the obvious continuous $\mathsf{sup}$-axioms that 
the action is linear and unitary, we obtain an axiomatizable 
class $\mathcal{K}_{GH}$. 
Given unitary action of $G$ on ${\bf H}$ we denote 
by $A(G,{\bf H})$ the member of $\mathcal{K}_{GH}$ 
which is obtained from this action.  
When we fix a continuity modulus, say 
$F$, for the $G$-variables of the operation $G \times B_1 \rightarrow B_{1}$ 
 we denote by $\mathcal{K}_{GH}(F )$ the 
corresponding subclass of $\mathcal{K}_{GH}$. 

\begin{definition} \label{K-delta}  
{\bf  The class}  
$\bigcup \{ \mathcal{K}_{\delta}(F): \delta \in (0,1) \cap \mathbb{Q}\}$.  
Let 
$\mathcal{K}_{\delta}(F)$ 
be the subclass of $\mathcal{K}_{GH}(F )$ 
axiomatizable by all axioms of the following form
$$ 
\mathsf{sup}_{x_1 ,...,x_k \in G} \mathsf{inf}_{v\in B_{m}} 
\mathsf{sup}_{x\in \bigcup x_{i}K_{\delta}} 
\mathsf{max}(\parallel x\circ v - v\parallel \dot- \mbox{ } {1\over n}, 
\arrowvert 1- \parallel v\parallel  \arrowvert )=0 ,
$$
$$
\mbox{ where } k,m,n\in\omega \setminus \{ 0\} 
\mbox{ and } K_{\delta} = \{ g\in G: d(1,g)\le \delta \}. 
$$ 
\end{definition} 

It is easy to see that the axiom of 
Definition \ref{K-delta} implies that 
each finite union $\bigcup^{k}_{i = 1} g_i K_{\delta}$ 
has a ${1\over n}$-invariant unit 
vector in ${\bf H}$. 
To see that it can be written 
by a formula of continuous logic note that  
$\mathsf{sup}_{x\in \bigcup x_{i}K_{\delta}}$
can be replaced by $\mathsf{sup}_{x}$ 
with simultaneous including of the quantifier-free 
part together with  
$\mathsf{max}(\delta\dot{-} d(x,x_i ) : 1\le i\le k)$
into the corresponding $\mathsf{min}$-formula. 

In fact the following theorem shows that in the class  of 
 locally compact metric groups condition 
 $F$-non-{\bf (T)} is a union of axiomatizable classes. 

\begin{theorem} \label{nT} 
Let $F$ be a continuity modulus for the $G$-variable 
of continuous functions $G \times B_1 \rightarrow B_1$. 

(a) In the class of all unitary $F$-representations 
of locally compact metric groups the condition of weak 
containment of the unit representation $1_{G}$ coincides  
with the condition that the corresponding structure  
$A(G,{\bf H})$ belongs to 
$\bigcup \{ \mathcal{K}_{\delta}(F): \delta \in (0,1) \cap \mathbb{Q}\}$. 

(b) In the class of all unitary $F$-representations 
of locally compact metric groups the condition of 
witnessing $F$-non-{\bf(T)} corresponds to a union 
of axiomatizable classes of structures of the form 
$A(G,{\bf H})$. 
\end{theorem}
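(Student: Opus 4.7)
The plan is to handle parts (a) and (b) in sequence, using local compactness to translate between the topological notion of weak containment and the explicit $\mathcal{K}_{\delta}(F)$-axioms of Definition \ref{K-delta}. For part (a), I would prove both directions by direct reduction. In the forward direction, assuming $1_{G} \prec \pi$, local compactness yields a rational $\delta \in (0,1)$ with $K_{\delta}$ relatively compact; for any tuple $x_{1}, \ldots, x_{k} \in G$ the finite union $\bigcup_{i} x_{i} K_{\delta}$ is compact, so weak containment furnishes a $\frac{1}{n}$-invariant unit vector in $\mathbf{H}$ for every $n$, verifying the axioms defining $\mathcal{K}_{\delta}(F)$. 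In the backward direction, given $A(G, \mathbf{H}) \in \mathcal{K}_{\delta}(F)$, a compact $Q \subseteq G$, and $\varepsilon > 0$, one covers $Q$ by finitely many translates $x_{i} K_{\delta}$ (compactness of $Q$), picks $n$ with $\frac{1}{n} < \varepsilon$, and applies the axiom to obtain a unit vector $\frac{1}{n}$-invariant on $\bigcup_{i} x_{i} K_{\delta}$, hence $\varepsilon$-invariant on $Q$; this gives $1_{G} \prec \pi$.

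For part (b), I would read ``$A(G, \mathbf{H})$ witnesses $F$-{\bf non-(T)}'' as the conjunction that $1_{G} \prec \pi$ and that $\mathbf{H}$ has no non-zero $G$-invariant vector. By part (a), the first conjunct already places $A(G, \mathbf{H})$ in the union $\bigcup_{\delta} \mathcal{K}_{\delta}(F)$. To capture the second conjunct inside a union of axiomatizable classes, I would refine by an additional rational parameter $\eta \in (0,1)$ measuring a uniform non-invariance gap: for each pair $(\delta, \eta)$ let $\mathcal{C}_{\delta, \eta}$ be the axiomatizable subclass of $\mathcal{K}_{\delta}(F)$ defined by adjoining a $\mathsf{sup}_{v}\mathsf{inf}_{g}$-type axiom asserting that every approximate unit vector is moved by at least $\eta$ by some element of $G$. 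The target statement then becomes: every $F$-{\bf non-(T)} witness can be replaced by an equivalent one lying in some $\mathcal{C}_{\delta, \eta}$.

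The hardest step is this last reduction. The bare condition $\mathbf{H}^{G} = \{0\}$ is $\Pi_{1}$ in character and not directly axiomatizable in continuous first-order logic, because non-invariance may fail to be uniform across the unit sphere of $\mathbf{H}$. I would rely on the observation that weak containment is preserved on passing to $(\mathbf{H}^{G})^{\perp}$, so one may without loss of generality work with the invariant-free part of the representation. A direct-sum or diagonal construction --- combining $F$-continuous representations (without non-zero invariant vectors) that have $(Q,\varepsilon)$-invariant unit vectors for each compact $Q$ and each $\varepsilon > 0$ --- should produce a single witness whose non-invariance is quantified by a positive rational $\eta$, placing it in $\mathcal{C}_{\delta, \eta}$. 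Verifying that this reconstruction respects the continuity modulus $F$ and fits the many-sorted Hilbert-space language of Section \ref{hs} is where most of the technical care will be needed.
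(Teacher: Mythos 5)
Your treatment of part (a) is essentially the paper's own argument: choose $\delta$ with $K_{\delta}$ compact for the forward direction, and cover a compact $Q$ by finitely many translates of the identity neighbourhood $K_{\delta}$ for the converse. That part is fine.

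For part (b) there is a genuine gap, and it is exactly at the step you flag as hardest. The missing idea is Proposition 1.1.4 of \cite{BHV}, recalled in the introduction to Section 4: the pair $(G,\sqrt{2})$ is a Kazhdan pair for \emph{every} topological group, so for any unitary representation with no non-zero invariant vector, every vector $v$ satisfies $\sqrt{2}\,\Vert v\Vert \le \Vert x\circ v - v\Vert$ for some $x\in G$. In other words, the non-invariance gap you are trying to manufacture is automatically uniform (of size $\sqrt{2}$ on unit vectors), so the condition $\mathbf{H}^{G}=\{0\}$ \emph{is} directly expressible by a single $\mathsf{sup}_{v}\mathsf{inf}_{x}$ condition (the paper's axiom {\bf NIV}), with no auxiliary parameter $\eta$, no passage to $(\mathbf{H}^{G})^{\perp}$, and no direct-sum reconstruction. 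The paper simply adjoins {\bf NIV} to each $\mathcal{K}_{\delta}(F)$ and is done.

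Your proposed workaround, besides being unnecessary, has two defects. First, weak containment of $1_{G}$ is \emph{not} in general preserved when passing to $(\mathbf{H}^{G})^{\perp}$: whether $1_{G}\prec \pi|_{(\mathbf{H}^{G})^{\perp}}$ for the particular $\pi$ you start with is precisely the kind of question property {\bf (T)} is about, and it can fail even for non-{\bf (T)} groups for a given $\pi$. Second, replacing the given representation by a new direct-sum witness changes the statement being proved: the theorem asserts that the class of structures $A(G,\mathbf{H})$ that witness $F$-non-{\bf (T)} is a union of axiomatizable classes, not merely that a group admitting some witness admits one lying in an axiomatizable class. With the Kazhdan-pair fact in hand, both problems evaporate and the proof collapses to: part (a) plus the observation that {\bf NIV} is equivalent to the absence of non-zero invariant vectors.
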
 

\begin{proof} 
(a) Let $G$ be a locally compact metric group 
and let the ball 
$K_{\delta}= \{ g\in G: d(g,1)\le \delta \}\subseteq G$ 
be compact. 
If a unitary $F$-representation of $G$ 
weakly contains the unit representation 
$1_{G}$, then considering it as a structure 
$A(G,{\bf H})$ we see that this structure belongs to 
$\mathcal{K}_{\delta}(F)$. 

On the other hand if some structure of the form 
$A(G,{\bf H})$ belongs to 
$\mathcal{K}_{\varepsilon}(\mathcal{F})$, 
then assuming that $\varepsilon \le\delta$ 
we easily see that the corresponding representation 
weakly contains $1_{G}$. 
If $\delta <\varepsilon$, then $K_{\varepsilon}$ 
may be non-compact. 
However since $K_{\delta} \subseteq K_{\varepsilon}$ 
any compact subset of $G$ belongs to a finite union 
of sets of the form $xK_{\varepsilon}$. 
Thus the axioms of $\mathcal{K}_{\varepsilon}(F)$ 
state that the corresponding structure $A(G,{\bf H})$ 
defines a representation weakly containing $1_{G}$. 

(b) The condition 
$$ 
\mathsf{sup}_{v\in B_1}  \mathsf{inf}_{x\in G}  
(1 \dot{-} (\parallel x\circ v - v\parallel + |1 -\parallel v\parallel |)) \le  0  
$$ 
(we call it {\bf NIV}) 
obviuosly implies that $G$ does not have invariant unit vectors. 
For the contrary direction we use the fact mentioned in the introduction of Section 4 that 
the absence of $G$-invariant unit vectors implies that $G$ 
does not have $\sqrt{2}$-invariant unit vectors.  
In particular for every $v\in B_1$ there is $x\in G$ 
such that 
$$
\sqrt{2} \parallel v\parallel\le \parallel x\circ v - v \parallel .
$$
This obviously implies {\bf NIV}. 

Adding {\bf NIV} to each $\mathcal{K}_{\delta}(F)$ we 
obtain axiomatizable classes as in the statement of (b). 
Below we call it $\mathcal{K}^{{\bf NIV}}_{\delta}(F)$.  
\end{proof}

\section{Comments} 

{\bf (I)} It is clear that the classes 
$\bigcup \{ \mathcal{K}_{\delta}(F): \delta \in (0,1) \cap \mathbb{Q}\}$
and 
$\bigcup \{ \mathcal{K}^{{\bf NIV}}_{\delta}(F): \delta \in (0,1) \cap \mathbb{Q}\}$ 
can be considered without the restriction of local compactness. 
However it does not look likely that then they axiomatize 
weak containment of the unit representation $1_{G}$ 
or witnessing non-{\bf (T)}. \\  
{\bf (II)} 
In spite of axiomatizability issues in our paper it is still not clear how large the class of locally compact non-compact groups with bi-invariant merics and property {\bf (T)}. 
This is especially interesting in the case of connected groups,    
since some standard Lie group examples do not admit compatible bi-invariant metrics. \\ 
{\bf (III)} 
Using the method of Proposition 1.2.1 from \cite{BHV}
one can show that if for every compact subset $Q$ of a locally compact 
metric group $(G,d)$ and every $\varepsilon >0$ 
there is an expansion of $G$ to a structure from 
$\mathcal{K}_{GH}(F)$ 
with a $(Q,\varepsilon )$-invariant unit vector 
but without non-zero invariant vectors, then 
$G$ has a unitary $F$-representation  
which belongs to 
$\mathcal{K}^{{\bf NIV}}_{\delta}(F)$ for appropriate 
$\delta \in (0,1) \cap\mathbb{Q}$.  \\ 
{\bf (IV)} 
Since the class of locally compact metric groups is not axiomatizable
\footnote{an ultraproduct of compact metric groups is not necessarily locally compact}, 
the subclasses of $\mathcal{K}_{\delta}(F)$ and 
$\mathcal{K}^{{\bf NIV}}_{\delta}(F)$ appearing in Theorem \ref{nT} 
are only {\em relatively axiomatizable}. 
On the other hand they have some nice properties of axiomatizable classes. 
For example the following statement holds. 

\begin{proposition} 
Any elementary substructure of any 
$A(G,{\bf H})\in \bigcup \{ \mathcal{K}^{{\bf NIV}}_{\delta}(F): \delta \in (0,1) \cap\mathbb{Q} \}$ 
with a locally compact $G$ 
also belongs to $\bigcup \{ \mathcal{K}^{{\bf NIV}}_{\delta}(F): \delta \in (0,1) \cap\mathbb{Q} \}$ 
and is of the form $A(G_0 ,{\bf H}_0 )$, 
where $G_0 \preceq G$, ${\bf H}_0 \preceq {\bf H}$ 
and $G_0$ is locally compact.  
\end{proposition}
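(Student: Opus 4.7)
The plan is to decompose the elementary substructure along the two groups of sorts, verify local compactness of the group reduct by a topological argument, and note that the defining axioms transfer automatically as first-order sentences. The main delicate point will be coordinating the parameter $\delta$ so that it simultaneously witnesses class membership and local compactness.

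\emph{Decomposition.} In the many-sorted signature, any elementary substructure $A_0 \preceq A(G,\mathbf{H})$ splits into a group-sort part $G_0$ and a Hilbert-sort part $\mathbf{H}_0$. Since every formula in the group-only language $(d,\cdot,{}^{-1},1)$ is a fortiori a formula of the full language, elementarity implies $G_0 \preceq G$; similarly $\mathbf{H}_0 \preceq \mathbf{H}$ in the Hilbert-space language of Section \ref{hs}. The binary operation $\circ$ restricts to a well-defined unitary action of $G_0$ on $\mathbf{H}_0$ with the same continuity modulus $F$, so $A_0 = A(G_0, \mathbf{H}_0)$ and $A_0 \in \mathcal{K}_{GH}(F)$.

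\emph{Local compactness of $G_0$.} Fix $\delta \in (0,1) \cap \mathbb{Q}$ with $A(G,\mathbf{H}) \in \mathcal{K}^{\mathbf{NIV}}_\delta(F)$. Since $G$ is locally compact, I would shrink $\delta$ further if necessary so that the closed ball $K_\delta \subseteq G$ is compact; this shrinking preserves membership in $\mathcal{K}^{\mathbf{NIV}}_\delta(F)$, because a smaller $\delta$ gives formally weaker axioms in Definition \ref{K-delta} (each finite union $\bigcup x_i K_\delta$ becomes smaller, so the $\frac{1}{n}$-almost-invariance requirement is easier to satisfy), while NIV does not depend on $\delta$ at all. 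Since $G_0$ is complete, it is closed in $G$; therefore $K_\delta \cap G_0$ is closed in the compact Hausdorff set $K_\delta$ and hence compact. It contains the open $\delta$-ball around $1$ in $G_0$, which is a non-empty neighborhood of the identity, so $G_0$ is locally compact.

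\emph{Transfer of the axioms.} After rewriting the bounded quantifier $\sup_{x \in \bigcup x_i K_\delta}$ as a free $\sup_x$ with the distance condition $\max_i(\delta \dot- d(x,x_i))$ absorbed into the $\min$-formula, exactly as the paper indicates just after Definition \ref{K-delta}, the axioms of $\mathcal{K}_\delta(F)$ become ordinary first-order continuous sentences, and \textbf{NIV} is manifestly first-order. Elementary equivalence of $A_0$ with $A(G,\mathbf{H})$ then immediately gives that $A_0$ satisfies all of them, so $A_0 \in \mathcal{K}^{\mathbf{NIV}}_\delta(F) \subseteq \bigcup_\delta \mathcal{K}^{\mathbf{NIV}}_\delta(F)$. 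The only real obstacle encountered is the compatibility issue addressed in the previous paragraph; once the monotonicity of the axioms in $\delta$ is observed, a single rational $\delta$ can serve both as a witness of class membership and of compactness of $K_\delta$ in $G$, and everything else follows formally from the many-sorted elementary substructure relation.
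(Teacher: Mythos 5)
Your proof is correct, and it reaches the conclusion by a genuinely different route at the one non-formal step, namely local compactness of $G_0$. You argue topologically: $G_0$ is complete, hence closed in $G$, so $K_\delta\cap G_0$ is a compact neighbourhood of $1$ in $G_0$. The paper argues model-theoretically: since $K_\delta$ is compact and its distance predicate is definable without parameters, $K_\delta\subseteq\mathsf{acl}(\emptyset)$, and algebraic sets are contained in every elementary substructure; hence the $\delta$-ball of $G_0$ does not merely sit inside $K_\delta$ but \emph{coincides} with it. That sharper conclusion is what the paper then uses to check explicitly that every compact $D\subseteq G_0$ is covered by finitely many translates $gK_\delta$, so that $M_0$ genuinely witnesses weak containment of $1_{G_0}$ --- which, by Theorem \ref{nT}(a), is the intended meaning of membership in $\bigcup_\delta\mathcal{K}_\delta(F)$ for locally compact groups. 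For the literal statement your route suffices: membership in $\mathcal{K}^{\mathbf{NIV}}_\delta(F)$ transfers by elementarity alone once the bounded quantifier is rewritten, your observation that shrinking $\delta$ only weakens the axioms correctly coordinates the choice of a single rational $\delta$ with $K_\delta$ compact, and even the covering argument for compact $D\subseteq G_0$ would still go through with $K_\delta\cap G_0$ in place of $K_\delta$, since the former is a compact neighbourhood of $1$ in $G_0$. What the algebraicity argument buys in addition is the stronger structural fact that $G$ and $G_0$ share the same small ball at the identity; what your argument buys is brevity and independence from the machinery of definable compact sets.
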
 

In the proof we use an additional tool 
from model theory. 
Let $M$ be a continuous metric structure. 
A tuple $\bar{a}$ from $M^n$ is {\em algebraic} in $M$ over 
$A$ if there is a compact subset $C\subseteq M^n$ such that 
$\bar{a}\in C$ and the distance predicate $\mathsf{dist}(\bar{x},C)$ 
is definable (in the sense of continuous logic, \cite{BYBHU}) 
in $M$ over $A$. 
Let $\mathsf{acl}(A)$ be the set of all elements algebraic over $A$. 
In continuous logic the concept of algebraicity is 
parallel to that in traditional model theory 
(see Section 10 of \cite{BYBHU}).  

\begin{proof} 
Let 
$M\in \bigcup \{ \mathcal{K}^{{\bf NIV}}_{\delta}(F): \delta \in (0,1) \cap\mathbb{Q} \}$ and $G$ be the group sort of $M$.
Choose $\delta >0$ so that 
the $\delta$-ball 
$K = \{ g\in G: d(g,1)\le \delta \}$ 
in $G$ is compact. 
Note that since the condition $d(g,1)\le \delta$ 
defines a totally bounded complete subset 
in any elementary extension of $G$, the set $K$ 
is a definable subset of $acl(\emptyset )$.  

Let $M_0 \preceq M$ and $G_0$ be the sort of $M_0$ 
corresponding to $G$. 
It remains to verify that for any compact subset 
$D\subset G_0$ and any $\varepsilon >0$ 
the representation $M_0$ always has 
a $(D,\varepsilon )$-invariant unit vector. 
To see this note that since $G_0 \prec G$ and $K$ is 
compact and algebraic, 
the ball $\{ g\in G_0: d(g,1)\le \delta \}\subset G_0$ 
is a compact neighborhood of 1 which coincides with $K$. 
In particular $D$ is contained in a finite union of 
sets of the form $gK$. 
The rest follows  from the conditions that 
$M_0 \in \mathcal{K}_{\delta}(F)$ and $G_0 \prec G$. 
\end{proof}

We do not know if the statement of this proposition  
holds without the assumtion that $G$  
locally compact. \\  
{\bf (V)}
 In Section 8.4 of \cite{pestov16} 
it is proved that if $\Gamma$ is a discrete group with 
property {\bf (T)}, then the direct power $\Gamma^{\omega}$ 
also has property {\bf (T)} as a topological group. 
On the other hand the topological group $\Gamma^{\omega}$ is 
a continuous metric group under the obvious metric. 
There is also a certain class of 'trivial examples' of 
non-locally compact groups with bi-invariant metrics 
that have property {\bf (T)}. 
Namely, there are abelian metrizable groups that admit no 
non-trivial unitary representations, i.e. satisfying 
property {\bf (T)}. 
Such an example can be found in \cite{doucha}.  
Since it is extremely amenable it does not admit non-trivial 
unitary representations. 
The author does not know other non-locally compact groups which are continuous metric groups with property {\bf (T)}. 
In particular are there non-trivial connected examples? 
This remark originally appeared in a discussion with Michal Doucha and then was extended by the referee.  \\  
{\bf (VI)}  \label{QT} 
The author thinks that the following question 
is basic in this topic.   
\begin{quote}  
{\em Let $(G,d)$ be a metric group which is a continuous structure. 
Assume that property {\bf (T)} holds in $G$. 
Does every elementary substructure of $(G,d)$ satisfy {\bf (T)}? }   
\end{quote}  
According the previous remark it looks reasonable to start with the discrete case: 
\begin{quote}
{\em Does an elementary substructure of a discrete group 
with property {\bf (T)} also have  
property {\bf (T)}? } 
\end{quote} 
 It is natural to consider this question in 
the case of linear groups, where property {\bf (T)} 
and elementary equivalence are actively studied, see 
\cite{Bunina} and \cite{EJZ}. \\ 
{\bf (VII)} 
Property {\bf FH} states that every action of $G$ 
by affine isometries on a Hilbert space has a fixed point. 
It is equivalent to property {\bf (T)} for 
$\sigma$-compact locally compact groups. 
Axiomatization of {\bf FH} is studied in 
arXiv paper \cite{saszaArX}. 
Since in this case unbounded actions appear, 
the approach is different there. \\ 
{\bf (VIII)} 
One of the definitions of non-amenability says that 
a topological group is  non-amenable if 
there is a locally convex topological 
vector space $V$ and a continuous affine 
representation of $G$ on $V$ such that  
some non-empty invariant convex compact subset $K$ of $V$ 
does not contain a $G$-fixed point (\cite{BHV}, Theorem G.1.7).  
If we restrict ourselves just by linear representations 
on normed/metric vector spaces we obtain a property 
which is stronger than non-amenability. 
We call it {\bf strong non-FP}. 
The  paper \cite{saszaArX} 
contains some results showing that the approach 
to non-{\bf (T)} presented in Sections 4 and 5 
can be applied to strong non-FP too. 
We would also mention that 
this arXiv paper also considers the class of groups 
which are not extremely amenable in some uniform way.  

\subsection*{Acknowledgments}

The research was partially supported by Polish National Science Centre grant DEC2011/01/B/ST1/01406. 

The author is grateful to the referee for helpful remarks. 
For example some referee's observations are included into 
Comments {\bf (I)} - {\bf (VIII)}.  

Section 4 of this paper is a version of Section 2 of 
\cite{saszaArX} (which was split).  
The author is grateful to the referee of \cite{saszaArX} 
for an advice concerning the proof of Theorem \ref{nT}.

\bigskip 

Department of Applied Mathematics, Silesian University of Technology\\  
ul.Kaszubska 23, 44-101 Gliwice, Poland \\ 
E-mail:  Aleksander.Iwanow@polsl.pl

\end{document}